\newtheorem{theorem}{Theorem}
\newtheorem{lemma}[theorem]{Lemma}
\newtheorem{definition}[theorem]{Definition}
\newtheorem{remark}{Remark}
\begin{document}

\begin{frontmatter}

\title{Galerkin Spectral Method for the Fractional Nonlocal Thermistor Problem}

\author[mrsa]{Moulay Rchid Sidi Ammi}
\ead{sidiammi@ua.pt}

\author[dfmt]{Delfim F. M. Torres\corref{cor}}
\ead{delfim@ua.pt}

\cortext[cor]{Corresponding author.}

\address[mrsa]{Department of Mathematics, AMNEA Group,
Faculty of Sciences and Techniques,\\
Moulay Ismail University, B.P. 509, Errachidia, Morocco}

\address[dfmt]{Center for Research and Development
in Mathematics and Applications (CIDMA),\\
Department of Mathematics, University of Aveiro, 3810-193 Aveiro,
Portugal}

% -------------------------------------------------------------

\begin{abstract}
We develop and analyse a numerical method for the time-fractional
nonlocal thermistor problem. By rigorous proofs, some error
estimates in different contexts are derived, showing that the
combination of the backward differentiation in time and the
Galerkin spectral method in space leads, for an enough smooth
solution, to an approximation of exponential convergence in space.
\end{abstract}

\begin{keyword}
fractional differential equations \sep finite difference method
\sep Galerkin spectral method \sep stability \sep error analysis.

\MSC[2010] 34A08 \sep 34A12 \sep 34D20 \sep 65L12.
\end{keyword}
\end{frontmatter}

% ----------------------------------------------------------

\section{Introduction}

Fractional derivatives express properties of memory and
heredity of materials, which is their main benefit when compared
with integer-order derivatives. Practical problems require
definitions of fractional derivatives that allow the use of
physically interpretable initial conditions. Fractional time
derivatives are linked with irregular sub-diffusion, where a
darken of particles spread slower than in classical diffusion. 
The fractional space derivatives are used to model irregular 
diffusion or dispersion, where a particle spreads at a rate 
that does not agree with the classical Brownian motion, and 
the follow can be asymmetric \cite{MR3071220}.

Fractional differential and integro-differential equations occur
in different real processes and physical phenomena, such as in
signal processing and image processing, optics, engineering,
control theory, computer science (such as real neural networks,
complex neural networks and information technology), statistics 
and probability, astronomy, geophysics, hydrology, chemical
technology, materials, robots, earthquake analysis, electric
fractal network, statistical mechanics, biotechnology, medicine,
and economics \cite{kilbas,pod,samko,MR3287248}.

In this paper, we consider the problem of the nonlocal
time-fractional thermistor problem. This fractional model is
obtained from the integer order one
\begin{equation}
\label{eq1} 
\frac{\partial u(x, t)}{\partial t}- \triangle u 
= \frac{\lambda f(u)}{\left( \int_{\Omega} f(u)\, dx\right)^{2}}\, ,
\mbox{ in } Q_{T} = \Omega \times (0, T),
\end{equation}
by replacing the derivative term by a fractional derivative of
order $\alpha > 0$:
\begin{equation}
\label{eq3}
\begin{gathered}
\frac{\partial^{\alpha} u}{\partial t^{\alpha}}- \triangle u
=\frac{\lambda f(u)}{\left( \int_{\Omega} f(u)\, dx\right)^{2}},
\quad \text{ in } Q_{T}= \Omega \times (0, T), \\
\frac{\partial u}{\partial n} = 0,
\quad \mbox{ on } S_{T} = \partial \Omega \times (0, T),\\
u(0)= u_{0}, \quad \text{ in } \Omega,
\end{gathered}
\end{equation}
where $\frac{\partial^{\alpha} u(x, t)}{\partial t^{\alpha}}$
denotes the Caputo fractional derivative of order $\alpha$, 
$0<\alpha < 1$, as defined in \cite{gorenflo} and given by
\[
\frac{\partial^{\alpha} u(x, t)}{\partial t^{\alpha}} =
\frac{1}{\Gamma(1-\alpha)} \int_{0}^{t} \frac{\partial u(x,
s)}{\partial s} \frac{ds}{(t-s)^{\alpha}}, \quad 0 < \alpha < 1,
\]
with $\triangle$ the Laplacian with respect to the spacial
variables and where $f$ is assumed to be a smooth function, 
as prescribed below, and $T$ is a fixed positive real. Here $n$
denotes the outward unit normal and $\frac{\partial}{\partial n}
= n \cdot \nabla$ is the normal derivative on $\partial \Omega$.
Such problems arise in many applications, for instance, in
studying the heat transfer in a resistor device whose electrical
conductivity $f$ is strongly dependent on the temperature $u$.
Constant $\lambda$ is a dimensionless parameter, which can be
identified with the square of the applied potential difference at
the ends of the conductor. Function $u$ represents the temperature
generated by the electric current flowing through a conductor.

A fractional order model instead of its classical integer order
counterpart has been considered here because fractional order
differential equations are generalizations of integer order
differential equations and fractional order models possess memory.
Moreover, the fact that resistors are influenced by memory makes
fractional modelling appropriate for this kind of dynamical
problems. We use Caputo's definition. The main advantage is that
the initial conditions for fractional differential equations with
Caputo derivatives take the same form as for integer-order
differential equations. Note that \eqref{eq3} covers \eqref{eq1}
and extends it to more general cases. The classical nonlocal
thermistor problem \eqref{eq1} with the time derivative of integer
order can be obtained by taking the limit $\alpha \rightarrow 1$
in \eqref{eq3} (see \cite{yumin}), while the case $\alpha =0$ 
corresponds to the steady state thermistor problem. 
In the case $0 < \alpha < 1$, the Caputo fractional 
derivative depends on and uses the information
of the solutions at all previous time levels (non-Markovian
process). In this case the physical interpretation of fractional
derivative is that it represents a degree of memory in the
diffusing material. Such kind of models have been analytically
investigated by a number of authors, using Green functions, the
Laplace and Fourier--Laplace transform methods, in order to
construct analytical solutions. However, papers in the literature
on the numerical solutions of time fractional differential
equations are still under development. In \cite{sidiammi1},
existence and uniqueness of a positive solution to a generalized
spatial fractional-order nonlocal thermistor problem is proved.
Stability and error analysis of the semi-discretized fractional
nonlocal thermistor problem is investigated in
\cite{sidiammi2,sidiammi3}. More precisely, in
\cite{sidiammi2,sidiammi3} a finite difference method is proposed,
respectively for solving the semidiscretized fractional nonlocal
thermistor problem and the time fractional thermistor problem,
which is a system of elliptic-parabolic PDEs and where some
stability as well as error analysis for this scheme are derived
for both problems. Herein, an approach based on finite differences
combined with the Galerkin spectral method is used to solve the
nonlocal time fractional thermistor problem. By definition of
fractional derivative, to compute the solution at the current time
level one needs to save all the previous solutions, which makes
the storage expensive if low-order methods are employed for
spatial discretization. One of the main advantage of the spectral
method is the fact that it can relax this storage limit since it
needs fewer grid points to produce a highly accurate solution
\cite{MR3367137,MR3359783}.

The text is organized as follows. In Section~\ref{sec2} a finite
difference scheme for the temporal discretization of problem
\eqref{eq3} is introduced. Then, in Section~\ref{sec3}, we provide
a finite difference-Galerkin spectral method to obtain error
estimates of $(2-\alpha)$-order convergence in time and
exponential convergence in space, for smooth enough solutions. 
The proof of our main result (Theorem~\ref{thm:ea}) is given in
Section~\ref{sec4}. Finally, in Section~\ref{sec5} we carry out 
an error analysis between the solution $u_{N}^{k}$ of the full
discretized problem and the exact solution $u$.
We end with Section~\ref{sec:conc} of conclusions
and future work.

% ----------------------------------------------------------

\section{Time discretization: A finite difference scheme}
\label{sec2}

Several theoretical analysis, on various aspects 
of both steady-state and time-dependent thermistor equations, 
with different aspects and types of boundary and initial conditions, 
have been carried out in the literature. For existence of weak solutions, 
uniqueness and related regularity and smoothness results, 
in several settings and under different assumptions on the coefficients, 
we refer the reader to \cite{antontsev}. For our purposes,
the $L^{\infty}$-energy method is a suitable and powerful 
tool to prove existence, regularity, and uniqueness 
of solutions to \eqref{eq3}. From the results of \cite{sidiammi4}, 
it follows by the $L^{\infty}$-energy method that 
problem \eqref{eq3} has a unique and sufficiently smooth 
solution under the following assumptions:
\begin{itemize}
\item[(H1)] $f: \mathbb{R} \rightarrow \mathbb{R}$ is a positive
Lipshitz and $\mathbb{C}^{1}$ continuous function;

\item[(H2)] there exist positive constants $c$ and $\beta$ such
that for all $\xi \in \mathbb{R}$ we have $c \leq f(\xi) \leq c
|\xi|^{\beta +1} +c$;

\item[(H3)] $u_{0} \in W^{1,\infty}(\Omega)$.
\end{itemize}
Let $\|\cdot\|_{0}$ be the $L^{2}$ norm. It can be shown 
(see, e.g., \cite{zidler}) that the quantity
\begin{equation}
\label{equiv} 
\|v\|_{1}= \left(\|v\|_{0}^{2}+\alpha_{0}\|
\frac{du}{dx}\|_{0}^{2}\right)^{\frac{1}{2}},
\end{equation}
where $\alpha_{0}$ is given below, defines a norm on
$H^{1}(\Omega)$, which is equivalent to the
$\|\cdot\|_{H^{1}(\Omega)}$ norm. Note that 
$\|\cdot\|_{m}$, $m>1$, is the $H^{m}$ norm.

We introduce a finite difference approximation to discretize the
time-fractional derivative. Let $\delta = \frac{T}{N}$ be the
length of each time step, for some large $N$, and $t_{k}= k
\delta$, $k=0, 1, \ldots , K$. We use the following formulation:
for all $0 \leq k \leq K-1$,
\begin{equation}
\label{eq4}
\begin{split}
\frac{\partial^{\alpha} u(x, t)}{\partial t^{\alpha}} &=
\frac{1}{\Gamma(1-\alpha)} \sum_{j=0}^{k}\int_{t_{j}}^{t_{j+1}}
\frac{\partial u(x, s)}{\partial s} \frac{ds}{(t_{k+1}-s)^{\alpha}}\\
&= \frac{1}{\Gamma(1-\alpha)} \sum_{j=0}^{k} \frac{u(x,t_{j+1})
-u(x, t_{j})}{\delta}\int_{t_{j}}^{t_{j+1}}
\frac{ds}{(t_{k+1}-s)^{\alpha}}+ r_{\delta}^{k+1},
\end{split}
\end{equation}
where $r_{\delta}^{k+1}$ is the truncation error. It can be seen
from \cite{yumin} that the truncation error verifies
\begin{equation}
\label{eq5} 
r_{\delta}^{k+1} \lesssim c_{u} \delta^{2-\alpha},
\end{equation}
where $c_{u}$ is a constant depending only on $u$. 
On the other hand, by change of variables, we have
\begin{equation*}
\begin{split}
\frac{1}{\Gamma(1-\alpha)} \sum_{j=0}^{k} &\frac{u(x,
t_{j+1})-u(x, t_{j})}{\delta}
\int_{t_{j}}^{t_{j+1}}\frac{ds}{(t_{k+1}-s)^{\alpha}}\\
&= \frac{1}{\Gamma(1-\alpha)} \sum_{j=0}^{k} \frac{u(x,
t_{j+1})-u(x, t_{j})}{\delta}
\int_{t_{k-j}}^{t_{k+1-j}} \frac{dt}{t^{\alpha}}\\
&= \frac{1}{\Gamma(1-\alpha)} \sum_{j=0}^{k} \frac{u(x,
t_{k+1-j})-u(x, t_{k-j})}{\delta}\int_{t_{j}}^{t_{j+1}}
\frac{dt}{t^{\alpha}}\\
&= \frac{1}{\Gamma(2-\alpha)} \sum_{j=0}^{k} \frac{u(x,t_{k+1-j})
-u(x, t_{k-j})}{\delta^{\alpha}}\left\{
(j+1)^{1-\alpha}-(j)^{1-\alpha}\right\}.
\end{split}
\end{equation*}
Let us denote $b_{j}:=(j+1)^{1-\alpha}- j^{1-\alpha}$,
$j=0, 1, \ldots k$. Note that
\begin{equation}
\label{eq12}
\begin{gathered}
b_{j} >0, \quad j=0, 1, \ldots k,\\
1=b_{0} > b_{1} > \ldots > b_{k}, \quad b_{k} \rightarrow 0 
\text{ as } k \rightarrow \infty,\\
\sum_{j=0}^{k}(b_{j}-b_{j+1}) + b_{k+1} = (1-b_{1})
+ \sum_{j=1}^{k-1}(b_{j}-b_{j+1}) + b_{k}=1.
\end{gathered}
\end{equation}
Define the discrete fractional differential 
operator $L^{\alpha}_{t}$ by
$$
L^{\alpha}_{t}u(x, t_{k+1}) = \frac{1}{\Gamma(2-\alpha)}
\sum_{j=0}^{k} b_{j} \frac{u(x, t_{k+1-j})-u(x,
t_{k-j})}{\delta^{\alpha}}.
$$
Then \eqref{eq4} becomes
\begin{equation*}
\frac{\partial^{\alpha} u(x,t_{k+1})}{\partial t^{\alpha}}
= L^{\alpha}_{t}u(x, t_{k+1}) + r_{\delta}^{k+1}.
\end{equation*}
Using this approximation, we arrive to the following 
finite difference scheme to \eqref{eq3}:
\begin{equation}
\label{eq8}
\begin{gathered}
L^{\alpha}_{t}u^{k+1}(x)-\triangle u^{k+1} 
= \frac{\lambda f(u^{k+1})}{( \int_{\Omega} f(u^{k+1})\, dx)^{2}}
\quad \text{ in }  \Omega,
\end{gathered}
\end{equation}
$k=1, \ldots, K-1$, where $u^{k+1}(x)$ are approximations 
to $u(x,t_{k+1})$. Scheme \eqref{eq8} can be reformulated as
\begin{equation}
\label{eq9}
\begin{split}
b_{0}u^{k+1}-\Gamma(2-\alpha)\delta^{\alpha}\triangle u^{k+1}
&=b_{0}u^{k}- \sum_{j=1}^{k}b_{j}\{ u^{k+1-j}-u^{k-j} \}
+\Gamma(2-\alpha)\delta^{\alpha} \frac{\lambda f(u^{k+1})}{\left(
\int_{\Omega} f(u^{k+1})\, dx\right)^{2}}\\
&=b_{0}u^{k} - \sum_{j=0}^{k-1}b_{j+1} u^{k-j}
+\sum_{j=1}^{k}b_{j} u^{k-j}+ \Gamma(2-\alpha)\delta^{\alpha}
\frac{\lambda f(u^{k+1})}{( \int_{\Omega} f(u^{k+1})\, dx)^{2}}\\
&=b_{0}u^{k} + \sum_{j=0}^{k-1}(b_{j}-b_{j+1}) u^{k-j}
+\Gamma(2-\alpha)\delta^{\alpha}\frac{\lambda f(u^{k+1})}{\left(
\int_{\Omega} f(u^{k+1}\right)\, dx)^{2}}.
\end{split}
\end{equation}
To complete the semi-discrete problem, 
we consider the boundary conditions
\begin{equation}
\label{equa11}
\begin{gathered}
\frac{\partial u^{k+1}}{\partial n}=0
\end{gathered}
\end{equation}
and the initial condition $u^{0}=u_{0}$. If we set $\alpha_{0}
:= \Gamma(2-\alpha) \delta^{\alpha}$, then \eqref{eq9} can be
rewritten in the form
\begin{equation}
\label{eq11} 
u^{k+1}-\alpha_{0} \triangle u^{k+1}= (1-b_{1})u^{k}
+\sum_{j=1}^{k-1}(b_{j}-b_{j+1})u^{k-j}+b_{k}u^{0}+ \alpha_{0}
\frac{\lambda f(u^{k+1})}{( \int_{\Omega} f(u^{k+1})\, dx)^{2}}
\end{equation}
for all $k \geq 1$. When $k=0$, scheme \eqref{eq11} reads
\begin{equation*}
u^{1}-\alpha_{0} \triangle u^{1}= u^{0}+ \alpha_{0}\frac{\lambda
f(u^{1})}{\left(\int_{\Omega} f(u^{1})\, dx\right)^{2}};
\end{equation*}
when $k=1$, scheme \eqref{eq11} becomes
\begin{equation*}
u^{2}-\alpha_{0} \triangle u^{2}= (1-b_{1})u^{1}+b_{1}u^{0}
+\alpha_{0}\frac{\lambda f(u^{2})}{\left(\int_{\Omega}
f(u^{2})\,dx\right)^{2}}.
\end{equation*}
Define the error term $r^{k+1}$ by
\begin{equation*}
r^{k+1} := \alpha_{0} \left\{\frac{\partial^{\alpha}
u(x,t_{k+1})}{
\partial t^{\alpha}} - L_{t}^{\alpha}u(x, t_{k+1})\right\}.
\end{equation*}
Then we get from \eqref{eq5} that
\begin{equation}
\label{eq12bis} 
|r^{k+1}| = \Gamma(2-\alpha) \delta^{\alpha}
|r_{\delta}^{k+1}| \leq c_{u} \delta^{2}.
\end{equation}
Our aim is now to define the weak formulation of \eqref{eq8}.

\begin{definition}
We say that $u^{k+1}$ is a weak solution of \eqref{eq8} if
\begin{equation}
\label{eq8bis}
\left(u^{k+1},v\right)+\alpha_{0}\int_{\Omega}\nabla u^{k+1}\nabla
v \,dx =\left(f^{k}, v\right)+ \alpha_{0}\frac{\lambda
f(u^{k+1})}{\left( \int_{\Omega} f\left(u^{k+1}\right)\,
dx\right)^{2}},
\end{equation}
where $f^{k}= (1-b_{1})u^{k}
+\sum_{j=1}^{k-1}(b_{j}-b_{j+1})u^{k-j}+b_{k}u^{0}$.
\end{definition}

% ----------------------------------------------------------

\section{A Galerkin spectral method in space}
\label{sec3}

Let $\Omega=(-1, 1)$. We define $\mathbb{P}_{N}(\Omega)$ to be the
space of all polynomials of degree $\leq N$ with respect to space
$x$. Then, denote $\mathbb{P}_{N}^{0}(\Omega):=H_{0}^{1}(\Omega)
\bigcap \mathbb{P}_{N}(\Omega)$. The Galerkin method is of
interest in its own right. It offers some advantages in numerical
analysis, and could be implemented once a suitable basis for the
space $\mathbb{P}_{N}^{0}$ is chosen. It consists in approximating
the solution by polynomials of high degree. Let the spectral 
discretization of problem \eqref{eq8bis} be defined as follows: 
find $u_{N}^{k+1} \in \mathbb{P}_{N}^{0}(\Omega)$ such that 
for all $v_{N} \in \mathbb{P}_{N}^{0}(\Omega)$
\begin{equation}
\label{eq9bis} 
\left(u^{k+1}_{N},v_{N}\right)+\alpha_{0}
\int_{\Omega}\nabla u^{k+1}_{N}\nabla v_{N} \,dx =\left(f_{N}^{k},
v_{N}\right)+ \frac{\lambda f(u^{k+1}_{N})}{\left( \int_{\Omega}
f(u^{k+1}_{N})\, dx\right)^{2}},
\end{equation}
where
$$
f_{N}^{k}= (1-b_{1})u^{k}_{N}+\sum_{j=1}^{k-1}
\left(b_{j}-b_{j+1}\right)u^{k-j}_{N}+b_{k}u^{0}_{N}.
$$
Thanks to the classical theory of elliptic problems, the
well-posedeness of problem \eqref{eq9bis} is immediate for given
$\{u_{N}^{j}\}_{j=0}^{k}$. Now our main goal is to derive an error
estimate for the full-discrete solution $\{u_{N}^{k}\}_{k=0}^{K}$.
Let $\pi_{N}^{1}$ be the $H^{1}$-orthogonal projection operator
from $H_{0}^{1}(\Omega)$ into $\mathbb{P}_{N}^{0}(\Omega)$ defined
as follows: for all $\psi \in H^{1}_{0}(\Omega)$, $\pi_{N}^{1}\psi
\in \mathbb{P}_{N}^{0}(\Omega)$, such that
\begin{equation}
\label{eq14bis} 
\left(\pi_{N}^{1}\psi ,v_{N}\right)
+\alpha_{0}\int_{\Omega}\nabla \pi_{N}^{1}\psi \nabla v_{N} \,dx
=\left(\psi ,v_{N}\right)+\alpha_{0}\int_{\Omega}\nabla \psi\nabla
v_{N} \,dx
\end{equation}
for all $v_{N} \in \mathbb{P}_{N}^{0}(\Omega)$. 
We recall the following projection estimate.

\begin{lemma}[See \cite{bernardi}]
\label{lm2} 
If $\psi \in H^{m}(\Omega)\bigcap H_{0}^{1}(\Omega)$,
$m\geq 1$, then
$$
\left\|\psi -\pi_{N}^{1}\psi \right\|_{1} \leq cN^{1-m}
\|\psi\|_{m}.
$$
\end{lemma}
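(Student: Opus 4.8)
The key is to read \eqref{eq14bis} for what it is: it says that $\pi_N^1\psi$ is the orthogonal projection of $\psi$ onto $\mathbb{P}_N^0(\Omega)$ with respect to the inner product $a(u,v):=(u,v)+\alpha_0\int_\Omega\nabla u\,\nabla v\,dx$, whose induced norm is exactly the norm $\|\cdot\|_1$ of \eqref{equiv} (since $a(v,v)=\|v\|_0^2+\alpha_0\|v'\|_0^2=\|v\|_1^2$). By the Pythagorean best-approximation property of orthogonal projections,
\[
\left\|\psi-\pi_N^1\psi\right\|_1=\inf_{v_N\in\mathbb{P}_N^0(\Omega)}\left\|\psi-v_N\right\|_1,
\]
so it suffices to exhibit one competitor $v_N\in\mathbb{P}_N^0(\Omega)$ with $\|\psi-v_N\|_1\le cN^{1-m}\|\psi\|_m$.

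I would build such a $v_N$ from the Legendre expansion of $\psi'$. Since $\psi\in H^m(\Omega)$ we have $\psi'\in H^{m-1}(\Omega)$; writing $\psi'=\sum_{k\ge 0}\hat\psi_k L_k$ with $\{L_k\}$ the Legendre polynomials, the boundary condition $\psi(\pm1)=0$ forces $\hat\psi_0=\tfrac12\int_{-1}^1\psi'\,dx=0$. Define $v_N(x):=\int_{-1}^x\sum_{k=1}^{N-1}\hat\psi_k L_k(t)\,dt$. Then $v_N\in\mathbb{P}_N(\Omega)$, $v_N(-1)=0$, and $v_N(1)=\sum_{k=1}^{N-1}\hat\psi_k\int_{-1}^1L_k(t)\,dt=0$, whence $v_N\in\mathbb{P}_N^0(\Omega)$.

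It then remains to estimate $\psi-v_N$. By construction $(\psi-v_N)'=\sum_{k\ge N}\hat\psi_k L_k$ is precisely the truncation error of the $L^2$-orthogonal projection of $\psi'$ onto $\mathbb{P}_{N-1}(\Omega)$, so the classical Legendre approximation estimate recalled from \cite{bernardi} gives $\|(\psi-v_N)'\|_0\le cN^{-(m-1)}\|\psi'\|_{m-1}\le cN^{1-m}\|\psi\|_m$. Since $\psi-v_N\in H_0^1(\Omega)$, Poincar\'e's inequality yields $\|\psi-v_N\|_0\le c\|(\psi-v_N)'\|_0$, and combining these with the definition of $\|\cdot\|_1$ gives $\|\psi-v_N\|_1\le cN^{1-m}\|\psi\|_m$; together with the first paragraph, this proves the lemma.

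The technical core — and the step I expect to be the real obstacle — is the Legendre truncation estimate $\|\varphi-P_{N-1}\varphi\|_0\le cN^{-(m-1)}\|\varphi\|_{m-1}$ used above, which must be obtained by quantifying the decay of the Legendre coefficients $\hat\varphi_k$ of a function $\varphi\in H^{m-1}(\Omega)$. That in turn rests on the $L_k$ being eigenfunctions of the singular Sturm--Liouville operator $u\mapsto-\bigl((1-x^2)u'\bigr)'$ with eigenvalues $k(k+1)$, followed by repeated integration by parts to transfer derivatives onto $\varphi$; for non-integer $m$ one closes the argument by interpolation between consecutive integer-order Sobolev spaces. Since this estimate is standard and is exactly what \cite{bernardi} supplies, in the paper I would simply invoke it and confine the exposition to the projection/Poincar\'e reduction above.
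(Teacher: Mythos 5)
The paper offers no proof of this lemma at all: it is imported verbatim from Bernardi--Maday \cite{bernardi} as a known spectral projection estimate, so there is no in-paper argument to compare against. Your proposal is a correct, essentially self-contained derivation and is a reasonable reconstruction of how the cited result is obtained. The three structural steps are all sound: (i) reading \eqref{eq14bis} as saying that $\pi_{N}^{1}$ is the orthogonal projection for the inner product $a(u,v)=(u,v)+\alpha_{0}\int_{\Omega}\nabla u\,\nabla v\,dx$, whose induced norm is precisely $\|\cdot\|_{1}$ of \eqref{equiv}, hence the best-approximation property; (ii) the competitor $v_{N}(x)=\int_{-1}^{x}\sum_{k=1}^{N-1}\hat\psi_{k}L_{k}(t)\,dt$, where the observation $\hat\psi_{0}=\tfrac12\bigl(\psi(1)-\psi(-1)\bigr)=0$ and the orthogonality $\int_{-1}^{1}L_{k}\,dt=0$ for $k\geq1$ correctly place $v_{N}$ in $\mathbb{P}_{N}^{0}(\Omega)$; (iii) the Poincar\'e step to control the $L^{2}$ part of the norm. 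The single ingredient you leave unproved --- the one-dimensional Legendre truncation estimate $\|\varphi-P_{N-1}\varphi\|_{0}\leq cN^{-(m-1)}\|\varphi\|_{m-1}$ --- is exactly what the paper's citation of \cite{bernardi} supplies, so deferring it matches the paper's own level of detail. Two minor remarks: since $\alpha_{0}=\Gamma(2-\alpha)\delta^{\alpha}$ enters the norm \eqref{equiv}, your final constant absorbs $\alpha_{0}$, which is harmless because $\alpha_{0}\leq\Gamma(2-\alpha)T^{\alpha}$ is uniformly bounded, but this is worth stating if one wants $c$ independent of $\delta$; and for non-integer $m$ your appeal to interpolation between integer-order spaces is the standard way to close the argument.
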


We carry out an error analysis between the solution $u_{N}^{k}$ of
the full discretized problem and the solution $u^{k}$ of the
semi-discretized problem.

\begin{theorem}
\label{thm:ea} 
Let $\{u_{N}^{k}\}_{k=0}^{K}$ be the solution of
problem \eqref{eq9bis} with $u_{N}^{0}= \pi_{N}^{1}u^{0}$ the
initial condition. Further, suppose that $u^{k} \in
H^{m}(\Omega)\bigcap H_{0}^{1}(\Omega)$, $m > 1$. 
Then the following error estimates hold:
\begin{itemize}
\item[(a)]
$$
\|u^{k}-u^{k}_{N}\|_{1} \leq \frac{c T^{\alpha}}{1-\alpha}
\delta^{-\alpha} N^{1-m} \max_{0 \leq j \leq k}\|u^{j}\|_{m},
\quad k=1, \ldots, K,
$$
where $0\leq \alpha < 1$ and $c$ is a positive constant;

\item[(b)] if $\alpha \rightarrow 1$, then
$$
\|u^{k}-u^{k}_{N}\|_{1}  \leq c \delta^{-1} N^{1-m} \sum_{j=0}^{k}
\delta \|u^{j}\|_{m}, \quad k=1, \ldots, K,
$$
with $c$ a constant depending only on $T$.
\end{itemize}
\end{theorem}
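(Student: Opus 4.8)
The plan is to estimate the error $e^{k}:=u^{k}-u^{k}_{N}$ in the $\|\cdot\|_{1}$ norm by comparing the semi-discrete weak formulation \eqref{eq8bis} with the spectral one \eqref{eq9bis}, using the projection $\pi^{1}_{N}$ as an intermediary. First I would split $e^{k}=\bigl(u^{k}-\pi^{1}_{N}u^{k}\bigr)+\bigl(\pi^{1}_{N}u^{k}-u^{k}_{N}\bigr)=:\rho^{k}+\theta^{k}$. The term $\rho^{k}$ is controlled directly by Lemma~\ref{lm2}, giving $\|\rho^{k}\|_{1}\le cN^{1-m}\|u^{k}\|_{m}$, so the work is to bound $\theta^{k}\in\mathbb{P}^{0}_{N}(\Omega)$. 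Taking $v_{N}=\theta^{k+1}$ in \eqref{eq9bis}, subtracting the same test function applied to \eqref{eq8bis}, and exploiting the defining identity \eqref{eq14bis} of $\pi^{1}_{N}$ (which makes the bilinear form of $\rho^{k+1}$ against $v_{N}$ vanish), I expect to arrive at an identity of the schematic form
\begin{equation*}
\|\theta^{k+1}\|_{1}^{2}=\bigl(f^{k}-f^{k}_{N},\theta^{k+1}\bigr)+\alpha_{0}\lambda\,\Bigl(\tfrac{f(u^{k+1})}{(\int_{\Omega}f(u^{k+1}))^{2}}-\tfrac{f(u^{k+1}_{N})}{(\int_{\Omega}f(u^{k+1}_{N}))^{2}}\Bigr)\theta^{k+1}+(\text{projection terms}).
\end{equation*}

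Next I would estimate each piece. The difference $f^{k}-f^{k}_{N}$ is, by definition, the linear combination $(1-b_{1})e^{k}+\sum_{j=1}^{k-1}(b_{j}-b_{j+1})e^{k-j}+b_{k}e^{0}$; since $e^{0}=u^{0}-\pi^{1}_{N}u^{0}=\rho^{0}$ is already of the desired order, and using the positivity of the weights together with $\sum_{j=0}^{k-1}(b_{j}-b_{j+1})+b_{k}=1$ from \eqref{eq12}, one bounds $\|f^{k}-f^{k}_{N}\|_{0}$ by $\max_{0\le j\le k}\|e^{j}\|_{1}$ up to the usual split into $\rho$ and $\theta$ parts. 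The nonlinear nonlocal term is handled using hypotheses (H1)--(H2): $f$ is Lipschitz and bounded below by $c>0$, so $\int_{\Omega}f(\cdot)\ge 2c>0$ on $\Omega=(-1,1)$, which makes the map $w\mapsto f(w)/(\int_{\Omega}f(w))^{2}$ Lipschitz in $w$ with a constant depending on $\lambda$, $c$, and the Lipschitz constant of $f$; this yields a bound by $C\alpha_{0}\|\theta^{k+1}\|_{0}+C\alpha_{0}\|\rho^{k+1}\|_{0}$. Absorbing the $\|\theta^{k+1}\|_{0}\le\|\theta^{k+1}\|_{1}$ factors into the left-hand side (for $\delta$ small enough that $C\alpha_{0}<1$, i.e. $C\Gamma(2-\alpha)\delta^{\alpha}<1$), I obtain a recursion
\begin{equation*}
\|\theta^{k+1}\|_{1}\le C\max_{0\le j\le k}\|\theta^{j}\|_{1}+CN^{1-m}\max_{0\le j\le k+1}\|u^{j}\|_{m}.
\end{equation*}

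Finally I would iterate this recursion. Unrolling it from $\theta^{0}=0$ over $k+1\le K$ steps produces a factor that, after carefully tracking the constants coming from the weights $b_{j}$ and from $\alpha_{0}=\Gamma(2-\alpha)\delta^{\alpha}$, collapses to the stated prefactor $\tfrac{cT^{\alpha}}{1-\alpha}\delta^{-\alpha}$ in part~(a): the $\delta^{-\alpha}$ arises because the effective per-step contraction is governed by $1/\alpha_{0}\sim\delta^{-\alpha}$, and $T^{\alpha}/(1-\alpha)$ comes from summing $b_{j}$-type weights, since $\sum_{j}b_{j}\le K^{1-\alpha}=(T/\delta)^{1-\alpha}$ and $\Gamma(2-\alpha)^{-1}\sim(1-\alpha)^{-1}$ as $\alpha\to1$. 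For part~(b), passing to the limit $\alpha\to1$ one has $b_{j}\to 0$ for $j\ge1$ while $\alpha_{0}\to\delta$, so the recursion degenerates to $\|\theta^{k+1}\|_{1}\le\|\theta^{k}\|_{1}+C\delta\|\cdot\|$-type terms, and summing the geometric-type telescoping over $k$ steps gives the Riemann-sum expression $c\delta^{-1}N^{1-m}\sum_{j=0}^{k}\delta\|u^{j}\|_{m}$ with $c=c(T)$. The main obstacle I anticipate is not any single estimate but the bookkeeping in the iteration: getting the weight sums $\sum(b_{j}-b_{j+1})$ and the powers of $\alpha_{0}$ to combine into exactly the claimed $T^{\alpha}\delta^{-\alpha}/(1-\alpha)$ factor, and in particular verifying that the constant $c$ in (a) can be taken independent of $\alpha$ on $[0,1)$ so that the $\alpha\to1$ limit in (b) is legitimate.
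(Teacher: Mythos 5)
Your overall architecture matches the paper's: the split $e^{k}=\rho^{k}+\theta^{k}$ with $\rho^{k}=u^{k}-\pi_{N}^{1}u^{k}$, testing the difference of \eqref{eq9bis} and the projected equation with $\theta^{k+1}$ so that the bilinear form of $\rho^{k+1}$ drops out, controlling the nonlocal nonlinearity by a Lipschitz-type bound (the paper invokes Lemma~\ref{lm46} for exactly this) and absorbing it via Young's inequality, then a triangle inequality and a recursion in $k$. Up to that point the proposal is sound.

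The gap is in how you resolve the recursion, and it is not mere bookkeeping. A recursion of the form $\|\theta^{k+1}\|_{1}\le C\max_{0\le j\le k}\|\theta^{j}\|_{1}+CN^{1-m}\max_{j}\|u^{j}\|_{m}$ with a generic constant $C$ cannot produce the claimed bound: if $C\ge 1$, unrolling gives growth like $C^{k}$, exponential in $1/\delta$; if $C<1$, it gives a bound with no $\delta^{-\alpha}$ at all. What makes the argument close is that the coefficients multiplying the earlier errors are exactly $a_{k}=1-b_{1}$, $a_{k-j}=b_{j}-b_{j+1}$, $a_{0}=b_{k}$ with $\sum_{j=0}^{k}a_{j}=1$, and the induction is run with the specific ansatz $\|e_{N}^{i}\|_{1}\le c\,b_{i-1}^{-1}\max_{0\le j\le i}\|u^{j}-\pi_{N}^{1}u^{j}\|_{1}$ (Lemma~\ref{lemma:5}); this ansatz is preserved precisely because $b_{j}^{-1}$ is increasing and $a_{0}=b_{k}$. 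Your identification of the source of the prefactor is also incorrect: $\frac{T^{\alpha}}{1-\alpha}\delta^{-\alpha}$ does not come from $1/\alpha_{0}$, nor from $\sum_{j}b_{j}\le K^{1-\alpha}$ (which would give the wrong power $T^{1-\alpha}\delta^{\alpha-1}$), and $\Gamma(2-\alpha)^{-1}\to 1$ as $\alpha\to 1$, so it cannot supply the $(1-\alpha)^{-1}$. The correct source is the elementary estimate $b_{k-1}=k^{1-\alpha}-(k-1)^{1-\alpha}\ge(1-\alpha)k^{-\alpha}$, hence $k^{-\alpha}b_{k-1}^{-1}\le\frac{1}{1-\alpha}$ and $b_{k-1}^{-1}\le\frac{(k\delta)^{\alpha}\delta^{-\alpha}}{1-\alpha}\le\frac{T^{\alpha}\delta^{-\alpha}}{1-\alpha}$. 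For part (b) the paper likewise runs a second induction with the ansatz $\|e_{N}^{i}\|_{1}\le c\sum_{j=0}^{i}\|u^{j}-\pi_{N}^{1}u^{j}\|_{1}$, again closed by $\sum_{j}a_{j}=1$, rather than a limiting telescoping argument.
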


% -----------------------------------------------------

\section{Proof of Theorem~\ref{thm:ea}}
\label{sec4}

By the definition \eqref{eq14bis} of $\pi_{N}^{1}$, we have
\begin{equation}
\label{eq15bis}
\begin{split}
\left(\pi_{N}^{1}u^{k+1} , v_{N}\right) &+
\alpha_{0}\int_{\Omega}\nabla \pi_{N}^{1}u^{k+1} \nabla v_{N} \,dx
= \left(1-b_{1}\right)\left(u^{k}, v_{N}\right)\\
&+ \sum_{j=1}^{k-1}(b_{j}-b_{j+1})(u^{k-j}, v_{N}) 
+b_{k}(u^{0}, v_{N}) +\lambda \alpha_{0}\left(
\frac{f(\pi_{N}^{1}u^{k+1})}{\left( \int_{\Omega}
f(\pi_{N}^{1}u^{k+1})\,dx\right)^{2}}, v_{N}\right)
\end{split}
\end{equation}
for all $v_{N} \in \mathbb{P}_{N}^{0}(\Omega)$. Let
$\widetilde{e}_{N}^{k+1}=\pi_{N}^{1}u^{k+1}-u_{N}^{k+1},
e_{N}^{k+1}= u^{k+1}-u_{N}^{k+1}$ and set $a_{k}=1-b_{1}$,
$a_{k-j}= b_{j}-b_{j+1}$, $j=1\ldots, k-1$, $a_{0}=b_{k}$.
Subtracting \eqref{eq9bis} from \eqref{eq15bis}, we get
\begin{multline}
\label{eq14} 
\left(\widetilde{e}_{N}^{k+1},v_{N}\right)
+\alpha_{0} \left( \nabla \widetilde{e}_{N}^{k+1}, \nabla v_{N}\right)
= a_{k}\left(e_{N}^{k},v_{N}\right)+\sum_{j=1}^{k-1}a_{k-j}(e_{N}^{k-j},
v_{N})+ a_{0}\left(e_{N}^{0}, v_{N}\right)\\
+ \alpha_{0}\left(\frac{\lambda
f\left(\pi_{N}^{1}u^{k+1}\right)}{\left( \int_{\Omega}
f(\pi_{N}^{1}u^{k+1})\,dx\right)^{2}}, v_{N}\right) 
- \alpha_{0}\left(\frac{\lambda f(u^{k+1})}{\left( 
\int_{\Omega} f(u^{k+1})\, dx\right)^{2}}, v_{N}\right).
\end{multline}
Taking $v_{N}= \widetilde{e}_{N}^{k+1}$ in \eqref{eq14}, we obtain
\begin{multline}
\label{eq16bis} 
\|\widetilde{e}_{N}^{k+1}\|_{1}^{2} \leq \left
\{a_{k} \|e_{N}^{k+1}\|_{0}+\sum_{j=1}^{k-1}a_{k-j}
\|e_{N}^{k-j}\|_{0}+a_{0} \|e_{N}^{0}\|_{0} \right \}
\|\widetilde{e}_{N}^{k+1}\|_{1}\\
+ \alpha_{0}\left(\frac{\lambda f(\pi_{N}^{1}u^{k+1})}{\left(
\int_{\Omega} f(\pi_{N}^{1}u^{k+1})\, dx\right)^{2}},
\widetilde{e}_{N}^{k+1}\right) - \alpha_{0}\left(\frac{\lambda
f(u^{k+1})}{\left( \int_{\Omega} f(u^{k+1})\, dx\right)^{2}},
\widetilde{e}_{N}^{k+1}\right).
\end{multline}
To continue the proof, we shall need the following lemma.

\begin{lemma}[See \cite{sidiammi2}]
\label{lm46} 
Let $u_{i}$, $i=1,2$, be two weak solutions of \eqref{eq3}. 
Assume that $(H1)$--$(H3)$ hold. Then,
\begin{equation*}
\left(\frac{\lambda f(u_{1})}{( \int_{\Omega} f(u_{1})\, dx)^{2}},
w\right) - \left( \frac{\lambda f(u_{2})}{( \int_{\Omega}
f(u_{2})\, dx)^{2}}, w\right) \leq c \|w\|_{2}^{2},
\end{equation*}
where $w=u_{1}-u_{2}$ and $c$ is a positive constant.
\end{lemma}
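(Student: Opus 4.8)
The plan is to treat the nonlocal nonlinearity
\[
g(v) := \frac{\lambda f(v)}{\left(\int_{\Omega} f(v)\,dx\right)^{2}}
\]
as a locally Lipschitz map, exploiting that, under $(H1)$--$(H3)$, the denominator stays uniformly bounded away from zero while the weak solutions $u_{1},u_{2}$ remain bounded. First I would abbreviate $I(v) := \int_{\Omega} f(v)\,dx$ and record two a priori facts. Since $(H2)$ gives $f \ge c > 0$ on $\mathbb{R}$, we have $I(u_{i}) \ge c\,|\Omega| > 0$ for $i=1,2$, so the denominators are bounded below by a positive constant independent of $u_{1},u_{2}$. Moreover, the $L^{\infty}$ regularity obtained via the $L^{\infty}$-energy method, together with the growth bound in $(H2)$, shows that $f(u_{i})$ is bounded in $L^{\infty}(\Omega)$, whence $I(u_{i}) \le C$ as well. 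These uniform bounds are precisely what make the subsequent estimates have constant (rather than solution-dependent) coefficients.

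Next I would split the difference algebraically as
\[
g(u_{1}) - g(u_{2}) = \lambda\,\frac{f(u_{1}) - f(u_{2})}{I(u_{1})^{2}}
+ \lambda\, f(u_{2})\left(\frac{1}{I(u_{1})^{2}} - \frac{1}{I(u_{2})^{2}}\right),
\]
and bound each term. For the first term the Lipschitz continuity of $f$ from $(H1)$ yields $|f(u_{1}) - f(u_{2})| \le L\,|w|$ pointwise, and the lower bound on $I(u_{1})$ turns this into $C_{1}|w|$. For the second term I would use the identity $\frac{1}{I(u_{1})^{2}} - \frac{1}{I(u_{2})^{2}} = \frac{(I(u_{2}) - I(u_{1}))(I(u_{1}) + I(u_{2}))}{I(u_{1})^{2} I(u_{2})^{2}}$ together with $|I(u_{1}) - I(u_{2})| \le L \int_{\Omega} |w|\,dx \le L\,|\Omega|^{1/2}\|w\|_{0}$, so that the uniform upper and lower bounds on $I$ and the $L^{\infty}$ bound on $f(u_{2})$ give a contribution of the form $C_{2}\|w\|_{0}$. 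Combining, $|g(u_{1})(x) - g(u_{2})(x)| \le C_{1}|w(x)| + C_{2}\|w\|_{0}$ almost everywhere.

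Finally I would pair with $w$ and integrate: by H\"older's inequality,
\[
\left(g(u_{1}) - g(u_{2}),\, w\right)
\le \int_{\Omega}\left(C_{1}|w| + C_{2}\|w\|_{0}\right)|w|\,dx
\le C_{1}\|w\|_{0}^{2} + C_{2}|\Omega|^{1/2}\|w\|_{0}^{2} = c\,\|w\|_{0}^{2},
\]
and since $\|w\|_{0} \le \|w\|_{2}$ the stated bound follows at once (indeed the argument yields the sharper $L^{2}$ estimate). I expect the only delicate point to be the control of the nonlocal factor, that is, the difference of the reciprocal squared integrals: this is exactly where the uniform lower bound $I(u_{i}) \ge c\,|\Omega|$ coming from the positivity of $f$, and the $L^{\infty}$ boundedness of $f(u_{2})$ coming from the regularity of weak solutions and the growth condition $(H2)$, are indispensable; without them the constants would depend on $u_{1},u_{2}$ and the claimed uniform estimate would fail.
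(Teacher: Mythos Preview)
Your argument is correct and is in fact the standard way this estimate is obtained. Note, however, that the paper does not supply its own proof of this lemma: it is quoted from \cite{sidiammi2} and used as a black box, so there is no in-paper proof to compare against. Your splitting
\[
g(u_{1})-g(u_{2})=\lambda\,\frac{f(u_{1})-f(u_{2})}{I(u_{1})^{2}}
+\lambda f(u_{2})\left(\frac{1}{I(u_{1})^{2}}-\frac{1}{I(u_{2})^{2}}\right)
\]
together with the Lipschitz bound from $(H1)$, the uniform lower bound $I(u_{i})\ge c\,|\Omega|$ from the positivity in $(H2)$, and the $L^{\infty}$ control on $u_{i}$ (hence on $f(u_{i})$ and $I(u_{i})$) from the $L^{\infty}$-energy method, is exactly the mechanism behind the cited result. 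Your observation that one actually obtains the stronger bound $c\,\|w\|_{0}^{2}$ is also pertinent: this is precisely the form in which the lemma is \emph{applied} in the paper (see the passage from \eqref{eq16bis} to \eqref{eq1bis}), so the $\|w\|_{2}^{2}$ in the statement is either a notational inconsistency with the cited source or simply a weaker, still sufficient, version of the $L^{2}$ estimate you derive.
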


\noindent Using \eqref{eq16bis}, we get
\begin{equation}
\label{eq1bis} 
\|\widetilde{e}_{N}^{k+1}\|_{1}^{2} \leq \left \{
a_{k} \|e_{N}^{k}\|_{0}+\sum_{j=1}^{k-1}a_{k-j}
\|e_{N}^{k-j}\|_{0}+a_{0} \|e_{N}^{0}\|_{0} \right \}
\|\widetilde{e}_{N}^{k+1}\|_{1} +  c
\|\widetilde{e}_{N}^{k+1}\|_{0}^{2}.
\end{equation}
From Young's inequality, we get
$$
\|\widetilde{e}_{N}^{k+1}\|_{1}^{2} \leq  (c+\varepsilon)
\|\widetilde{e}_{N}^{k+1}\|_{1}^{2} +c_{\varepsilon}\left \{ a_{k}
\|e_{N}^{k}\|_{0}+\sum_{j=1}^{k-1}a_{k-j}
\|e_{N}^{k-j}\|_{0}+a_{0} \|e_{N}^{0}\|_{0} \right \}^{2}
$$
for $c$, $c_{\varepsilon}$ and $\varepsilon$ positive constants.
Hence,
$$
\left(1-(c+\varepsilon)\right) \|\widetilde{e}_{N}^{k+1}\|_{1}^{2}
\leq c_{\varepsilon}\left \{ a_{k}
\|e_{N}^{k}\|_{0}+\sum_{j=1}^{k-1}a_{k-j}
\|e_{N}^{k-j}\|_{0}+a_{0} \|e_{N}^{0}\|_{0} \right \}^{2}.
$$
For a suitable choice of $\varepsilon$, we get
$$
\|\widetilde{e}_{N}^{k+1}\|_{1} \leq c \left \{ a_{k}
\|e_{N}^{k}\|_{0}+\sum_{j=1}^{k-1}a_{k-j}
\|e_{N}^{k-j}\|_{0}+a_{0} \|e_{N}^{0}\|_{0} \right \}
$$
with $c$ a positive constant. We also have, 
by the triangular inequality, that
$$
\|{e}_{N}^{k+1}\|_{1} \leq \|\widetilde{e}_{N}^{k+1}\|_{1}
+\|u^{k+1}-\pi_{N}^{1}u^{k+1}\|_{1}.
$$
Then,
\begin{equation}
\label{eq15}
\begin{split}
\|e_{N}^{k+1}\|_{1}  \leq c \left( a_{k}
\|e_{N}^{k}\|_{0}+\sum_{j=1}^{k-1}a_{k-j}
\|e_{N}^{k-j}\|_{0}+a_{0} \|e_{N}^{0}\|_{0}\right)
+\|u^{k+1}-\pi_{N}^{1}u^{k+1}\|.
\end{split}
\end{equation}
We finish the proof of Theorem~\ref{thm:ea}
by distinguishing the two cases of $\alpha$ 
and proving the necessary estimates.

\begin{lemma}
\label{lemma:5} 
$(i)$ If $0\leq \alpha < 1$, then
\begin{equation}
\label{eq16} 
\|e_{N}^{i}\|_{1} \leq c b^{-1}_{i-1}\max_{0\leq j
\leq k} \|u^{j}-\pi_{N}^{1}u^{j}\|_{1}, \quad i=1, 2, \ldots, K.
\end{equation}
$(ii)$ If $\alpha \rightarrow 1$, then
\begin{equation}
\label{eq17} 
\|e_{N}^{i}\|_{1} \leq c
\sum_{j=0}^{k}\|u^{j}-\pi_{N}^{1}u^{j}\|_{1}, 
\quad i=1, 2,
\ldots, K.
\end{equation}
\end{lemma}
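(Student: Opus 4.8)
The plan is to prove both bounds by induction on the time level, using the master inequality \eqref{eq15} as the recursion, the structural properties \eqref{eq12} of the kernel coefficients $b_j$, and the fact that the weights appearing in \eqref{eq15} are a convex combination:
$$
a_k+\sum_{j=1}^{k-1}a_{k-j}+a_0=(1-b_1)+(b_1-b_k)+b_k=1 ,\qquad a_m\ge 0 .
$$
Throughout write $E:=\max_{0\le j\le k}\|u^{j}-\pi_N^{1}u^{j}\|_1$, and note that since $u_N^0=\pi_N^1u^0$ we have $e_N^0=u^0-\pi_N^1u^0$, hence $\|e_N^0\|_1\le E$. Also recall, from the mean value theorem applied to $t\mapsto t^{1-\alpha}$, that $b_{i-1}^{-1}\le \frac{1}{1-\alpha}i^{\alpha}$, which together with $i\delta\le T$ is what later turns \eqref{eq16} into the $\delta^{-\alpha}$ factor of Theorem~\ref{thm:ea}(a).

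For $(i)$: the base case $i=1$ is the $k=0$ instance of the scheme, for which \eqref{eq15} collapses to $\|e_N^1\|_1\le c\|e_N^0\|_0+\|u^1-\pi_N^1u^1\|_1\le (c+1)E=(c+1)\,b_0^{-1}E$. Assume \eqref{eq16} holds at all levels $\le k$. In \eqref{eq15} bound $\|e_N^{k-j}\|_0\le\|e_N^{k-j}\|_1$, insert the inductive hypothesis into each history term, and bound $\|e_N^0\|_0\le E$. Here is the crux of the whole argument: because $b_j$ is strictly decreasing by \eqref{eq12}, every factor $b_{\ell-1}^{-1}$ produced by the hypothesis (for a level $\ell\le k$) is $\le b_{k-1}^{-1}$, and $b_{k-1}^{-1}\le b_k^{-1}$, while $a_0=b_k\le 1\le b_k^{-1}$; combining this with $\sum_m a_m\le 1$ shows that the coefficient of $E$ on the right-hand side is bounded by a constant multiple of $b_k^{-1}$. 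This closes the induction and yields \eqref{eq16}.

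For $(ii)$: observe that as $\alpha\to 1$ one has $1-\alpha\to 0$, so $b_j=(j+1)^{1-\alpha}-j^{1-\alpha}\to 0$ for every $j\ge 1$, while $b_0=1$; consequently $a_k=1-b_1\to 1$ and $a_{k-1},\dots,a_1,a_0$ all tend to $0$. In this regime \eqref{eq15} degenerates to $\|e_N^{k+1}\|_1\le c\|e_N^k\|_1+\|u^{k+1}-\pi_N^1u^{k+1}\|_1$, and iterating this single-step recursion from level $0$, together with $\|e_N^0\|_1\le\|u^0-\pi_N^1u^0\|_1$, gives $\|e_N^i\|_1\le c\sum_{j=0}^{k}\|u^{j}-\pi_N^1u^{j}\|_1$, that is, \eqref{eq17} (which, via $k\le T/\delta$, later produces the $\delta^{-1}$ in Theorem~\ref{thm:ea}(b)).

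The step I expect to be the real obstacle is the inductive passage in $(i)$: it is not enough that the weights sum to $1$; one must simultaneously exploit the monotonicity of $b_j$ — to pair each history term $\|e_N^{\ell}\|_1\le c\,b_{\ell-1}^{-1}E$ with a weight small enough that the weighted sum telescopes to (a constant times) $b_k^{-1}E$ — and make sure the constant coming from \eqref{eq15} and from successive uses of the hypothesis does not compound over the $O(1/\delta)$ time levels. This is exactly why \eqref{eq15} is arranged, via a careful application of Young's inequality together with the smallness of $\alpha_0=\Gamma(2-\alpha)\delta^{\alpha}$, so that the multiplicative factor weighting the history terms can be taken arbitrarily close to $1$. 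An equivalent way to organise this, if one prefers, is to read \eqref{eq15} as a discrete fractional Grönwall inequality and invoke a ready-made estimate of that type; the outcome is the same growth factor $b_{k-1}^{-1}\sim\frac{1}{1-\alpha}k^{\alpha}$ recorded in \eqref{eq16}.
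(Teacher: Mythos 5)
Your part $(i)$ is exactly the paper's argument: induction on the time level in \eqref{eq15}, using that $(b_j^{-1})_j$ is increasing to replace every $b_{\ell-1}^{-1}$ from the hypothesis by $b_{k-1}^{-1}\le b_k^{-1}$, and that $a_k+\sum_{j=1}^{k-1}a_{k-j}+a_0=1$ to collapse the weighted sum; the conversion $k^{-\alpha}b_{k-1}^{-1}\le\frac{1}{1-\alpha}$ and $k\delta\le T$ is likewise the paper's route to Theorem~\ref{thm:ea}(a). For part $(ii)$ you organise things differently: you pass to the literal limit $\alpha\to 1$, where only $a_k\to 1$ survives, and iterate the resulting one-step recursion. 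The paper instead keeps all the history terms and runs the \emph{same} induction as in $(i)$ with inductive bound $c\sum_{j=0}^{\ell}\|u^j-\pi_N^1u^j\|_1$, again closed by $\sum_m a_m=1$; this has two small advantages over your version: it shows \eqref{eq17} holds for every $\alpha\in[0,1)$ (the label ``$\alpha\to1$'' only signals when this bound beats \eqref{eq16}, whose constant $\frac{1}{1-\alpha}$ blows up), and it hides the geometric factor $c^{k}$ that your one-step iteration makes explicit. The compounding-of-constants issue you flag is real, but it afflicts the paper's induction equally (each application of the hypothesis should multiply the constant by $c$), and your remedy --- arranging the Young's inequality step so the factor multiplying the history terms is close to $1$ --- is the same tacit assumption the paper makes; so this is a shared imprecision, not a gap specific to your argument.
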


\begin{proof}
$(i)$ By \eqref{eq15}, inequality \eqref{eq16} is obvious for $i=1$. 
Suppose now that \eqref{eq16} holds for $i=1,2, \ldots,k$.
We prove that it remains true for $i=k+1$. By \eqref{eq15}, the
induction hypothesis, and the fact that $(b_{j}^{-1})_{j}$ is an
increasing sequence $\left(b_{j}^{-1} \leq b_{j+1}^{-1}\right)$,
we have, because $a_{0}=b_{k}$ and $\sum_{j=0}^{k}a_{j} =1$, that
\begin{equation}
\begin{split}
\|e_{N}^{k+1}\|_{1} & \leq c \left(a_{k}
\|e_{N}^{k}\|_{0}+\sum_{j=1}^{k-1}a_{k-j}
\|e_{N}^{k-j}\|_{0}+a_{0} \|e_{N}^{0}\|_{0}\right)
+\|u^{k+1}-\pi_{N}^{1}u^{k+1}\|_{1}\\
& \leq c \left( a_{k}b^{-1}_{k-1} +\sum_{j=1}^{k-1}a_{k-j}
b^{-1}_{k-1}\right) \max_{0 \leq j \leq
k}\|u^{j}-\pi_{N}^{1}u^{j}\|_{1}+
\|u^{k+1}-\pi_{N}^{1}u^{k+1}\|_{1}\\
& \leq c \left( a_{k} +\sum_{j=1}^{k-1}a_{k-j} +b_{k}\right)
b_{k}^{-1} \max_{0 \leq j \leq k+1}\|u^{j}-\pi_{N}^{1}u^{j}\|_{1},\\
& \leq c \left( a_{k} +\sum_{j=1}^{k-1}a_{k-j} +a_{0}\right)
b_{k}^{-1} \max_{0 \leq j \leq k+1}\|u^{j}-\pi_{N}^{1}u^{j}\|_{1}\\
& \leq c  b_{k}^{-1} \max_{0 \leq j \leq
k+1}\|u^{j}-\pi_{N}^{1}u^{j}\|_{1}.
\end{split}
\end{equation}
The estimate \eqref{eq16} is proved. Then,
\begin{equation}
\begin{split}
\|e_{N}^{k}\|_{1} & \leq c b_{k-1}^{-1} \max_{0 \leq j \leq
k}\|u^{j}-\pi_{N}^{1}u^{j}\|_{1}\\
& \leq c k^{-\alpha} b_{k-1}^{-1} k^{\alpha} \max_{0 \leq j \leq
k}\|u^{j}-\pi_{N}^{1}u^{j}\|_{1}\\
& \leq c k^{-\alpha} b_{k-1}^{-1}
\delta^{-\alpha}(k\delta)^{\alpha}
\max_{0 \leq j \leq k}\|u^{j}-\pi_{N}^{1}u^{j}\|_{1}\\
& \leq  \frac{c T^{\alpha}}{1-\alpha} \delta^{-\alpha}  N^{1-m}
\max_{0 \leq j \leq k}\|u^{j}\|_{m},
\end{split}
\end{equation}
$1 \leq k \leq K$, where we have used in the above inequalities
the definition of $b_{k}$ and the fact that
$$
k\delta \leq T, k^{-\alpha} b^{-1}_{k-1} \leq \frac{1}{1-\alpha},
\quad k=1, 2, \ldots, K,
$$
which can be obtained by direct calculations. 
$(ii)$ Now, we consider the case $\alpha \rightarrow 1$. 
Again, we proceed by mathematical induction. 
The estimate \eqref{eq17} is easier to prove for $i=1$ 
using \eqref{eq15}. Suppose now that \eqref{eq17}
holds for all $i=1,\ldots,k$. We prove it is also true 
for $i=k+1$. By \eqref{eq15}, we have
\begin{equation*}
\begin{split}
\|e_{N}^{k+1}\|_{1}  &\leq c \left(a_{k}
\|e_{N}^{k}\|_{0}+\sum_{j=1}^{k-1}a_{k-j}
\|e_{N}^{k-j}\|_{0}+a_{0} \|e_{N}^{0}\|_{0} \right)
+\|u^{k+1}-\pi_{N}^{1}u^{k+1}\|_{1}\\
& \leq c \left( a_{k} +\sum_{j=1}^{k-1}a_{k-j} +a_{0} \right)
\sum_{j=0}^{k}\|u^{j}-\pi_{N}^{1}u^{j}\|_{1}+
\|u^{k+1}-\pi_{N}^{1}u^{k+1}\|_{1}\\
& \leq c \sum_{j=0}^{k+1}\|u^{j}-\pi_{N}^{1}u^{j}\|_{1}.
\end{split}
\end{equation*}
Inequality \eqref{eq17} is now derived. 
Therefore, by Lemma~\ref{lm2}, we have
\begin{equation*}
\|e_{N}^{k}\|_{1} \leq
\sum_{j=0}^{k}\|u^{j}-\pi_{N}^{1}u^{j}\|_{1} \leq c
\delta^{-1}N^{1-m} \sum_{j=0}^{k} \delta \|u^{j}\|_{m}.
\end{equation*}
This ends the proof of Lemma~\ref{lemma:5}
and the proof of Theorem~\ref{thm:ea}.
\end{proof}

\begin{remark}
The sum $\sum_{j=0}^{k} \delta \|u^{j}\|_{m}$ 
is the analogous discrete form 
of $\int_{0}^{t_{k}}  \|u(t)\|_{m} dt$.
\end{remark}

% --------------------------------------------------------------

\section{Error estimate between the solution of the full
discretized problem and the exact one}
\label{sec5}

Our aim now is to derive an estimate for
$\|u(t_{k})-u^{k}_{N}\|_{1}$.

\begin{theorem}
\label{thm6}
Let $u$ be the exact solution of \eqref{eq3},
and $(u^{k}_{N})_{k=0}^{K}$ be the solution of \eqref{eq9bis} with
the initial condition $u_{N}^{0}=\pi_{N}^{1}u^{0}$. Suppose that
$u$ is regular enough such that $u \in H^{1}([0, T]$,
$H^{m}(\Omega)\bigcap H_{0}^{1}(\Omega))$, $m>1$. 
Then the following error estimates hold:
\begin{itemize}
\item[(a)] if $0 \leq \alpha < 1$, then
$$
\|u(t_{k})-u^{k}_{N}\|_{1} \leq \frac{T^{\alpha}}{1-\alpha}
\left(c_{u}\delta^{2-\alpha}+c \delta^{-\alpha}
N^{1-m}\|u\|_{L^{\infty}(H^{m})} \right), \quad k=1, \ldots, K,
$$
where $c_{u}$ is a constant depending on $u$;

\item[(b)] if $\alpha \rightarrow 1$, then
$$
\|u(t_{k})-u^{k}_{N}\|_{1} \leq  T \left(c_{u} \delta +c
\delta^{-1} N^{1-m}\|u\|_{L^{\infty}(H^{m})} \right), \quad k=1,
\ldots, K,
$$
\end{itemize}
where $\|u\|_{L^{\infty}(H^{m})}=\sup_{t \in (0, T)}
\|u(x,t)\|_{m}$, $c_{u}$ depends on $u$, and $c$ 
and $c_{u}$ are independent constants 
of $\delta$, $T$ and $N$.
\end{theorem}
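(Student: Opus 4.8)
The plan is to compare the exact solution with the fully discretized one by inserting the semi-discrete solution $u^{k}$ of \eqref{eq8} as an intermediary, so that
\[
\|u(t_{k})-u^{k}_{N}\|_{1} \le \|u(t_{k})-u^{k}\|_{1} + \|u^{k}-u^{k}_{N}\|_{1},
\]
where the second term is already controlled by Theorem~\ref{thm:ea}. Thus the only genuinely new ingredient is an estimate for the time-discretization error $\eta^{k}:=u(t_{k})-u^{k}$, which I would obtain by repeating, almost verbatim, the energy-plus-induction argument of Section~\ref{sec4}, now with the truncation term $r^{k+1}$ (satisfying $|r^{k+1}|\le c_{u}\delta^{2}$ by \eqref{eq12bis}) playing the role previously played by the projection error $u^{k+1}-\pi_{N}^{1}u^{k+1}$.

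In detail, I would first write the weak form of \eqref{eq3} at $t=t_{k+1}$ using $\partial_{t}^{\alpha}u(x,t_{k+1})=L_{t}^{\alpha}u(x,t_{k+1})+r_{\delta}^{k+1}$, multiply by $\alpha_{0}=\Gamma(2-\alpha)\delta^{\alpha}$, and subtract the weak scheme \eqref{eq8bis} written for $u^{k+1}$. With $a_{k}=1-b_{1}$, $a_{k-j}=b_{j}-b_{j+1}$, $a_{0}=b_{k}$ as in Section~\ref{sec4}, this produces an error identity for $\eta^{k+1}$ whose right-hand side consists of the weighted combination $a_{k}(\eta^{k},v)+\sum_{j=1}^{k-1}a_{k-j}(\eta^{k-j},v)+a_{0}(\eta^{0},v)$, the nonlinear difference $\alpha_{0}\big(\tfrac{\lambda f(u(t_{k+1}))}{(\int_{\Omega}f(u(t_{k+1}))\,dx)^{2}}-\tfrac{\lambda f(u^{k+1})}{(\int_{\Omega}f(u^{k+1})\,dx)^{2}},v\big)$, and the forcing $-(r^{k+1},v)$. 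Choosing $v=\eta^{k+1}$, bounding the nonlinear term by Lemma~\ref{lm46}, and absorbing $c\|\eta^{k+1}\|_{0}^{2}\le c\|\eta^{k+1}\|_{1}^{2}$ into the left side via Young's inequality, exactly as in the passage from \eqref{eq16bis} to \eqref{eq15}, I get
\[
\|\eta^{k+1}\|_{1} \le c\Big(a_{k}\|\eta^{k}\|_{0}+\sum_{j=1}^{k-1}a_{k-j}\|\eta^{k-j}\|_{0}+a_{0}\|\eta^{0}\|_{0}\Big)+c\,|r^{k+1}|,
\]
where $\eta^{0}=u(t_{0})-u^{0}=u_{0}-u_{0}=0$, so the last term in the bracket drops out.

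Next, exactly as in Lemma~\ref{lemma:5}, an induction on $k$ using $\sum_{j=0}^{k}a_{j}=1$ and the monotonicity of $(b_{j}^{-1})_{j}$ gives, in the case $0\le\alpha<1$, $\|\eta^{k}\|_{1}\le c\,b_{k-1}^{-1}\max_{0\le j\le k}|r^{j}|\le c\,b_{k-1}^{-1}c_{u}\delta^{2}$; then the elementary bounds $k^{-\alpha}b_{k-1}^{-1}\le(1-\alpha)^{-1}$ and $(k\delta)^{\alpha}\le T^{\alpha}$ yield $\|\eta^{k}\|_{1}\le\frac{c\,T^{\alpha}}{1-\alpha}c_{u}\delta^{2-\alpha}$. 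In the limit case $\alpha\to1$ the same induction gives $\|\eta^{k}\|_{1}\le c\sum_{j=0}^{k}|r^{j}|\le c(k+1)\delta\cdot c_{u}\delta\le c_{u}T\delta$. Adding these to the respective estimates of Theorem~\ref{thm:ea}, and replacing $\|u^{j}\|_{m}$ by $\|u\|_{L^{\infty}(H^{m})}$ (justified by the regularity hypothesis $u\in H^{1}([0,T],H^{m}(\Omega)\cap H_{0}^{1}(\Omega))$), produces the two claimed bounds (a) and (b).

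The main obstacle I expect is not any single step but keeping the induction clean: one must check that the telescoping of the weights $b_{j}$ that worked for the spatial error in Lemma~\ref{lemma:5} still closes when the forcing is the uniformly $O(\delta^{2})$ truncation error rather than a $k$-independent projection bound, and in particular that the factor $b_{k-1}^{-1}$, which grows like $k^{\alpha}$, is exactly compensated by one power $\delta^{\alpha}$ to give the advertised $\delta^{2-\alpha}$ rate together with the $T^{\alpha}/(1-\alpha)$ prefactor. A secondary point needing care is the passage from $\max_{j}\|u^{j}\|_{m}$, which is what Theorem~\ref{thm:ea} delivers, to $\|u\|_{L^{\infty}(H^{m})}$; this needs the stated Bochner regularity of $u$ together with an $H^{m}$-stability bound for the semidiscrete scheme, or, alternatively, re-running the argument of Section~\ref{sec4} with $\pi_{N}^{1}u(t_{j})$ in place of $\pi_{N}^{1}u^{j}$ so that Lemma~\ref{lm2} directly produces $N^{1-m}\|u(t_{j})\|_{m}\le N^{1-m}\|u\|_{L^{\infty}(H^{m})}$.
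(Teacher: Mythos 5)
Your argument is sound in outline but follows a genuinely different route from the paper. You split the error through the semi-discrete solution, $\|u(t_{k})-u_{N}^{k}\|_{1}\le\|u(t_{k})-u^{k}\|_{1}+\|u^{k}-u_{N}^{k}\|_{1}$, reuse Theorem~\ref{thm:ea} for the second term, and run a fresh induction for the first, with the truncation term $r^{k+1}$ (satisfying $|r^{k+1}|\le c_{u}\delta^{2}$) as forcing and $\eta^{0}=0$; the telescoping with $\sum_{j}a_{j}=1$ and the monotonicity of $(b_{j}^{-1})$ does close as you anticipate and yields $\|\eta^{k}\|_{1}\le c_{u}b_{k-1}^{-1}\delta^{2}\le\frac{c_{u}T^{\alpha}}{1-\alpha}\delta^{2-\alpha}$. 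The paper never introduces $u^{k}$: it writes a single error equation for $\varepsilon_{N}^{k+1}=u(t_{k+1})-u_{N}^{k+1}$ via the projection $\pi_{N}^{1}u(t_{k+1})$, so that the recursion \eqref{eq23} simultaneously carries the truncation term $c_{u}\delta^{2}$ and the projection error $\|u(t_{k+1})-\pi_{N}^{1}u(t_{k+1})\|_{1}\le cN^{1-m}\|u(t_{k+1})\|_{m}$; an auxiliary induction (estimate \eqref{eq24}) controls the history terms, and \eqref{eq25} then assembles both contributions. The practical difference is exactly the loose end you flag yourself: Theorem~\ref{thm:ea} delivers $\max_{j}\|u^{j}\|_{m}$, the $H^{m}$ norm of the \emph{semi-discrete} solution, and converting this to $\|u\|_{L^{\infty}(H^{m})}$ requires an $H^{m}$-stability bound for the scheme \eqref{eq8} that is nowhere established in the paper. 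The paper's direct comparison avoids this entirely because its projection error involves $u(t_{k+1})$ rather than $u^{k+1}$, so Lemma~\ref{lm2} immediately produces $N^{1-m}\|u\|_{L^{\infty}(H^{m})}$. Your second proposed fix --- re-running the argument with $\pi_{N}^{1}u(t_{j})$ in place of $\pi_{N}^{1}u^{j}$ --- is in substance what the paper does, so to make your proof self-contained you should either adopt that fix or supply the missing $H^{m}$ bound on the semi-discrete iterates.
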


\begin{proof}
(a) Let
$\widetilde{\varepsilon}_{N}^{k+1}=\pi_{N}^{1}u(t_{k+1})-u_{N}^{k+1}(x)$,
$\varepsilon_{N}^{k+1}= u(t_{k+1})-u_{N}^{k+1}$. We have
\begin{multline}
\label{eq23bis} 
\left( u(t_{k+1}) ,v\right) 
+\alpha_{0}\int_{\Omega}\nabla u(t_{k+1})   \nabla v \,dx
= (1-b_{1})(u(t_{k}), v)+ \sum_{j=1}^{k-1}(b_{j}-b_{j+1})(u(t_{k-j}), v)\\
+ b_{k}(u(t_{0}), v) + (r^{k+1}, v ) + \lambda \alpha_{0}
\left(\frac{f(u(t_{k+1}))}{\left( \int_{\Omega} f(u(t_{k+1}))\,dx
\right)^{2}}, v\right)
\end{multline}
for all $v\in H_{0}^{1}(\Omega)$. By the definition of the
projecting operator $\pi_{N}^{1}$ into $\mathbb{P}_{0}^{N}$, 
one has
\begin{equation}
\label{eq24bis}
\begin{split}
\left(\pi_{N}^{1}u(t_{k+1}) , v_{N}\right)
&+ \alpha_{0}\int_{\Omega}\nabla \pi_{N}^{1}u(t_{k+1})   \nabla v_{N} \,dx \\
&= (1-b_{1})(u(t_{k}), v_{N}) 
+\sum_{j=1}^{k-1}(b_{j}-b_{j+1})(u(t_{k-j}),
v_{N}) +b_{k}(u(t_{0}), v_{N})\\
&\qquad +( r^{k+1}, v ) + \lambda
\alpha_{0}\left(\frac{f(\pi_{N}^{1}u(t_{k+1}))}{ \left(
\int_{\Omega} f(\pi_{N}^{1}u(t_{k+1}))\,dx\right)^{2}}, v_{N}\right)
\end{split}
\end{equation}
for all $v_{N} \in \mathbb{P}_{N}^{0}(\Omega)$. Subtracting
\eqref{eq23bis} from \eqref{eq24bis}, we get, by taking 
$v_{N}=\widetilde{\varepsilon}_{N}^{k+1}$, using the triangular
inequality $\|{\varepsilon}_{N}^{k+1}\|_{1} \leq
\|\widetilde{\varepsilon}_{N}^{k+1}\|_{1}
+\|u(t_{k+1})-\pi_{N}^{1}u(t_{k+1})\|_{1}$ and following a standard
procedure as above, and using $\|r^{k+1}\|_{0} \leq c_{u}\delta^{2}$, that
\begin{equation}
\label{eq23} 
\|\varepsilon_{N}^{k+1}\|_{1}  \leq c \left(a_{k}
\|\varepsilon_{N}^{k}\|_{0}+\sum_{j=1}^{k-1}a_{k-j}
\|\varepsilon_{N}^{k-j}\|_{0}+a_{0} \|\varepsilon_{N}^{0}\|_{0} \right)\\
+ c_{u} \delta^{2}+ \|u(t_{k+1})-\pi_{N}^{1}u(t_{k+1})\|_{1}.
\end{equation}
On the other hand, using similar arguments, we can get
\begin{equation}
\label{eq24} 
\|\varepsilon^{j}_{N}\|_{1}
=\|u(t_{j})-u_{N}^{j}\|_{1} 
\leq c_{u} b_{j-1}^{-1} \delta^{2},
\quad j =1, 2, \ldots, K.
\end{equation}
The above inequality is obvious for $j=1$. 
Indeed, the error equation reads
$$
\left(\varepsilon_{N}^{1}, v_{N}\right)+ \alpha_{0} \left(\nabla
\varepsilon_{N}^{1},  \nabla v_{N}\right) 
= \left(\varepsilon_{N}^{0}, v_{N}\right) 
+\left(r^{1}, v_{N}\right)
= \left(r^{1}, v_{N}\right)
\quad \forall v_{N} \in H_{0}^{1}(\Omega).
$$
Letting $v_{N}= \varepsilon_{N}^{1}$, we have
$$
\|\varepsilon_{N}^{1}\|_{1}^{2} \leq \|r^{1}\|_{0}
\|\varepsilon^{1}_{N}\|_{0},
$$
which gives with \eqref{eq12bis} that
\begin{equation*}
\|\varepsilon^{1}_{N}\|_{1}  \leq c_{u} b_{0}^{-1} \delta^{2}.
\end{equation*}
Suppose now that \eqref{eq24} holds for all $j =1, 2, \ldots, k$.
We need to prove that it also holds for $j=k+1$. Similarly to the
above case, by combining  the corresponding equations of the exact
and discrete solutions and taking $v= \varepsilon^{k+1}_{N}$ as a
test function, it yields that
\begin{equation*}
\begin{split}
\|\varepsilon^{k+1}_{N}\|_{1}^{2}
&=\|\varepsilon^{k+1}_{N}\|_{0}^{2}+\alpha_{0}\|
\nabla \varepsilon^{k+1}_{N}\|_{0}^{2}\\
&\leq
(1-b_{1})\|\varepsilon^{k}_{N}\|_{0}\|\varepsilon^{k+1}_{N}\|_{0}
+\sum_{j=1}^{k-1}(b_{j}-b_{j+1})\| \varepsilon^{k-j}_{N}\|_{0} \|
\varepsilon^{k+1}_{N}\|_{0}
+ b_{k} \|\varepsilon^{0}_{N}\|_{0}\|\varepsilon^{k+1}_{N}\|_{0}\\
&\qquad + \| r^{k+1}\|_{0}\|\varepsilon^{k+1}_{N}\|_{0}
+ c\| \varepsilon^{k+1}_{N}\|_{1}^{2}\\
&\leq \left\{ (1-b_{1}) (c_{u}k \delta^{2})
+\sum_{j=1}^{k-1}(b_{j}-b_{j+1})( c_{u}(k-j) \delta^{2})
+ c_{u} \delta^{2} \right\} \|\varepsilon^{k+1}_{N}\|_{0}
+ c\| \varepsilon^{k+1}_{N}\|_{1}^{2}\\
& \leq \left\{ (1-b_{1}) \frac{k}{k+1} 
+\sum_{j=1}^{k-1}(b_{j}-b_{j+1})\frac{k-j}{k+1}+\frac{1}{k+1}
\right\}c_{u} (k+1)\delta^{2}\|\varepsilon^{k+1}_{N}\|_{0}
+ c\| \varepsilon^{k+1}_{N}\|_{1}^{2}\\
& \leq \left\{ (1-b_{1}) + \sum_{j=1}^{k-1}(b_{j}-b_{j+1})
-(1-b_{1})\frac{1}{k+1} -
\sum_{j=1}^{k-1}(b_{j}-b_{j+1})\frac{j+1}{k+1} +\frac{1}{k+1}\right\}\\
&\qquad \times c_{u} (k+1)\delta^{2}\|\varepsilon^{k+1}_{N}\|_{0}
+c\| \varepsilon^{k+1}_{N}\|_{1}^{2}.
\end{split}
\end{equation*}
Note that
\begin{equation*}
(1-b_{1})\frac{1}{k+1}+\sum_{j=1}^{k-1}(b_{j}-b_{j+1})\frac{j+1}{k+1}
+ b_{k} \geq \frac{1}{k+1} \left\{ (1-b_{1})+
\sum_{j=1}^{k-1}(b_{j}-b_{j+1}) +b_{k} \right\}= \frac{1}{k+1}.
\end{equation*}
It follows that
\begin{equation*}
\begin{split}
\|\varepsilon^{k+1}_{N}\|_{1}^{2}&  \leq \left\{ (1-b_{1})
+\sum_{j=1}^{k-1}(b_{j}-b_{j+1})+ b_{k} \right\} c_{u}
(k+1)\delta^{2}\|\varepsilon^{k+1}_{N}\|_{0} +c\|
\varepsilon^{k+1}_{N}\|_{1}^{2}.
\end{split}
\end{equation*}
Then, similar to the earlier development, one has
\begin{equation*}
(1-(c+\varepsilon))\| \varepsilon^{k+1}_{N}\|_{1}^{2} 
\leq \left(\left\{ (1-b_{1})+ \sum_{j=1}^{k-1}(b_{j}-b_{j+1}) 
+b_{k}\right\} c_{\varepsilon}c_{u}(k+1)\delta^{2} \right)^{2} 
=\left(c_{\varepsilon}c_{u}(k+1)\delta^{2}\right)^{2}.
\end{equation*}
It follows, for a well chosen $\varepsilon$ such that
$1-(c+\varepsilon) > 0$, that $\| \varepsilon^{k+1}_{N}\|_{1}
\leq c_{u}(k+1)\delta^{2}$. The estimate \eqref{eq24} is proved.
Applying \eqref{eq24} in \eqref{eq23} and using Lemma~\ref{lm2} gives
\begin{equation}
\label{eq25}
\begin{split}
\|\varepsilon_{N}^{k+1}\|_{1}  & \leq c \left(a_{k}
\|\varepsilon_{N}^{k}\|_{0}+\sum_{j=1}^{k-1}a_{k-j}
\|\varepsilon_{N}^{k-j}\|_{0}+a_{0}
\|\varepsilon_{N}^{0}\|_{0}\right)+ c_{u} \delta^{2}
+\|u(t_{k+1})-\pi_{N}^{1}u(t_{k+1})\|_{1}\\
& \leq  \left( a_{k} b_{k-1}^{-1}+\sum_{j=1}^{k-1}a_{k-1}
b_{k-j}^{-1} \right)
c_{u} \delta^{2} + c_{u} \delta^{2} + c N^{1-m}\|u(t_{k+1)}\|_{m}\\
& \leq \left( a_{k} +\sum_{j=1}^{k-1}a_{k-j}\right)
c_{u}b_{k}^{-1} \delta^{2}+ c_{u}  b_{k}^{-1}
b_{k}\delta^{2} +c N^{1-m}\|u(t_{k+1)}\|_{m}\\
& \leq \left( a_{k} +\sum_{j=1}^{k-1}a_{k-j} + b_{k} \right) c_{u}
b_{k}^{-1} \delta^{ 2}
+c N^{1-m}\|u(t_{k+1)}\|_{m}\\
& \leq  c_{u} b_{k}^{-1} \delta^{ 2} +c N^{1-m}\|u(t_{k+1)}\|_{m}.
\end{split}
\end{equation}
Using again $k^{-\alpha}b_{k-1}^{-1} \leq \frac{1}{1-\alpha}$ 
and $k\delta \leq T$, $k=1, 2, \ldots, K$, we have
\begin{equation*}
\begin{split}
\|\varepsilon_{N}^{k}\|_{1}  
& \leq  c_{u} b_{k-1}^{-1} \delta^{2}
+ c N^{1-m}\|u(t_{k)}\|_{m}\\
& \leq c_{u} k^{-\alpha}  b_{k-1}^{-1} k^{\alpha}\delta^{ 2}
+c \delta^{ \alpha} \delta^{ -\alpha} N^{1-m}\|u(t_{k)}\|_{m}\\
& \leq c_{u} (k^{-\alpha}  b_{k-1}^{-1})
(k\delta)^{\alpha}\delta^{ 2-\alpha}
+c (k\delta)^{\alpha} k ^{-\alpha}\delta^{ -\alpha} N^{1-m}\|u(t_{k)}\|_{m}\\
& \leq  (k^{-\alpha}  b_{k-1}^{-1})T^{\alpha} \left(c_{u} \delta^{
2-\alpha} + c \delta^{ -\alpha} N^{1-m}\|u\|_{L^{\infty}(H^{m})}\right)\\
& \leq \frac{T^{\alpha}}{1-\alpha} \left(c_{u} \delta^{ 2-\alpha}
+ c \delta^{ -\alpha} N^{1-m}\|u\|_{L^{\infty}(H^{m})} \right).
\end{split}
\end{equation*}
(b) Following the same lines as \eqref{eq24}, we have 
$$
\|u(t_{j})-u^{j}_{N}\|_{1} \leq c_{u} j\delta^{2},
\quad j=1, \ldots, K.
$$
Using the triangular inequality, we obtain
\begin{equation}
\label{eq27}
\begin{split}
\|\varepsilon_{N}^{k+1}\|_{1} & \leq c \left(a_{k}
\|\varepsilon_{N}^{k}\|_{0}+\sum_{j=1}^{k-1}a_{k-j}
\|\varepsilon_{N}^{k-j}\|_{0}+a_{0} \|\varepsilon_{N}^{0}\|_{0}
\right)+ c_{u} \delta^{2}
+\|u(t_{k+1})-\pi_{N}^{1}u(t_{k+1})\|_{1} \\
& \leq a_{k} (c_{u} k \delta^{2}) +\sum_{j=1}^{k-1}a_{k-j} (c_{u}
(k-j) \delta^{2}) + c_{u} \delta^{2}
+ cN^{1-m}\|u\|_{L^{\infty}(H^{m})} \\
& \leq \left( a_{k} \frac{k}{k+1}
+\sum_{j=1}^{k-1}a_{k-j}\frac{k-j}{k+1}+ \frac{1}{k+1}\right) c_{u}
(k+1)\delta^{2}+cN^{1-m}\|u\|_{L^{\infty}(H^{m})} \\
& \leq \left( a_{k} +\sum_{j=1}^{k-1}a_{k-j}-\frac{a_{k}}{k+1}
-\sum_{j=1}^{k-1}a_{k-j}\frac{j+1}{k+1}+\frac{1}{k+1} \right)
c_{u} (k+1)\delta^{2} +cN^{1-m}\|u\|_{L^{\infty}(H^{m})}.
\end{split}
\end{equation}
Since $k+1 \geq 1$, we easily see that
$$
a_{k}\frac{1}{k+1} 
+\sum_{j=1}^{k-1}a_{k-j}\frac{j+1}{k+1}+\frac{1}{k+1}+a_{0} 
\geq \frac{1}{k+1} \left(a_{k} + \sum_{j=1}^{k-1}a_{k-j}+a_{0} \right)
=\frac{1}{k+1}.
$$
Then,
$$
-\frac{a_{k}}{k+1} - \sum_{j=1}^{k-1}a_{k-j}
\frac{j+1}{k+1}+\frac{1}{k+1} \leq a_{0}.
$$
Injecting the above inequality into \eqref{eq27} gives
\begin{equation*}
\begin{split}
\|\varepsilon_{N}^{k+1}\|_{1}  & \leq \left( a_{k}
+\sum_{j=1}^{k-1}a_{k-j} +a_{0} \right) c_{u} (k+1)\delta^{2}
+cN^{1-m}\|u\|_{L^{\infty}(H^{m})}\\
& \leq c_{u} (k+1)\delta^{2}+ +cN^{1-m}\|u\|_{L^{\infty}(H^{m})}.
\end{split}
\end{equation*}
Therefore, we obtain that
\begin{equation*}
\begin{split}
\|\varepsilon_{N}^{k}\|_{1}  & \leq c_{u} k \delta^{2}
+cN^{1-m}\|u\|_{L^{\infty}(H^{m})}, \\
& \leq c_{u} T \delta + (c k \delta) (k \delta)^{-1}
N^{1-m}\|u\|_{L^{\infty}(H^{m})}\\
& \leq T \left( c_{u}  \delta + c \delta^{-1}
N^{1-m}\|u\|_{L^{\infty}(H^{m})} \right)
\end{split}
\end{equation*}
for all $1 \leq k \leq K$ such that $k \delta \leq T$. 
Hence, item $(b)$ of Theorem~\ref{thm6} is proved.
\end{proof}

% ----------------------------------------------------------

\section{Conclusion}
\label{sec:conc}

We considered the problem of the nonlocal
time-fractional thermistor problem in the Caputo sense.
The main novelty was to use fractional derivatives 
to model memory effects. Main results include: 
a finite difference scheme for the temporal discretization 
of the problem; and a finite difference-Galerkin spectral 
method to obtain error estimates of fractional order convergence.
It should be mentioned that the Galerkin method is generally 
computationally expensive and difficult to extend to more complex geometries 
and higher spatial dimensions. Compared to a standard semilinear equation, 
the main challenge here is due to the nonstandard nonlocal nonlinearity 
on the right-hand side of the partial differential equation. 
For the existence of solution to the scheme, 
the Lax--Milgram theorem is not applicable due to the nonlocal term. 
The latter makes the calculus technical and cumbersome. Furthermore, 
for example Lemma~\ref{lm2} cannot be applied because 
of lack of regularity of the solution. 
The estimated errors obtained by our method 
depend strictly on the solution, which needs to be regular. 
Another difficulty is that the solution in a given time 
depends on the solutions of all previous time levels. 
Then, to compute the solution at the current time level, 
one needs to save all previous solutions. This fact 
makes the storage expensive. In the present context, 
numerical experiments are therefore an interesting
direction of future research. 

% ----------------------------------------------------------

\section*{Acknowledgements}

This work has been partially supported by the Portuguese
Foundation for Science and Technology (FCT) within project
UID/MAT/04106/2013 (CIDMA). The authors are very grateful 
to two anonymous referees for their helpful comments, 
whose stimulating questions and remarks allowed 
the improvement of the paper.

% ----------------------------------------------------------

\bigskip

% ----------------------------------------------------------

% ----------------------------------------------------------


\begin{thebibliography}{99}

\bibitem{MR3071220}
R. W. Ibrahim, Modified fractional Cauchy problem in a complex
domain, Adv. Difference Equ. {\bf 2013}, 2013:149, 10~pp.

\bibitem{kilbas}
A. A. Kilbas, H. M. Srivastava\ and\ J. J. Trujillo, 
{\it Theory and applications of fractional differential equations},
North-Holland Mathematics Studies, 204, Elsevier, Amsterdam, 2006.

\bibitem{pod}
I. Podlubny, 
{\it Fractional differential equations}, 
Mathematics in Science and Engineering, 198, 
Academic Press, San Diego, CA, 1999.

\bibitem{samko}
S. G. Samko, A. A. Kilbas\ and\ O. I. Marichev, 
{\it Fractional integrals and derivatives}, 
Gordon and Breach, Yverdon, 1993.

\bibitem{MR3287248}
Y. Zhou,
{\it Basic theory of fractional differential equations},
World Scientific Publishing Co. Pte. Ltd., Hackensack, NJ, 2014.

\bibitem{gorenflo}
R. Gorenflo, F. Mainardi, D. Moretti\ and\ P. Paradisi, 
Time fractional diffusion: a discrete random walk approach, 
Nonlinear Dynam. {\bf 29} (2002), no.~1-4, 129--143.

\bibitem{yumin}
Y. Lin\ and\ C. Xu, 
Finite difference/spectral approximations 
for the time-fractional diffusion equation, 
J. Comput. Phys. {\bf 225} (2007), no.~2, 1533--1552.

\bibitem{sidiammi1}
M. R. Sidi Ammi\ and\ D. F. M. Torres, 
Existence and uniqueness of a positive solution 
to generalized nonlocal thermistor problems 
with fractional-order derivatives, 
Differ. Equ. Appl. {\bf 4} (2012), no.~2, 267--276.
{\tt arXiv:1110.4922}

\bibitem{sidiammi2}
M. R. Sidi Ammi\ and\ A. El Hachimi, 
Stability and error analysis of the semidiscretized 
fractional nonlocal thermistor problem,
Conference papers in Mathematics {\bf 2013} (2013),
Art.~ID~454329, 8~pp.

\bibitem{sidiammi3}
M. R. Sidi Ammi\ and\ A. Taakili, 
Finite difference method for the
time-fractional thermistor problem,
Int. J. Difference Equ. {\bf 8} (2013), no.~1, 77--97.

\bibitem{MR3367137}
A. H. Bhrawy, T. M. Taha\ and\ J. A. T. Machado,
A review of operational matrices and spectral 
techniques for fractional calculus,
Nonlinear Dynam. {\bf 81} (2015), no.~3, 1023--1052.

\bibitem{MR3359783}
X. Ye\ and\ C. Xu,
A space-time spectral method for the time 
fractional diffusion optimal control problems,
Adv. Difference Equ. {\bf 2015}, 2015:156, 20~pp.

\bibitem{antontsev}
S. N. Antontsev\ and\ M. Chipot, 
The thermistor problem: existence, smoothness uniqueness, blowup, 
SIAM J. Math. Anal. {\bf 25} (1994), no.~4, 1128--1156.

\bibitem{sidiammi4} 
M. R. Sidi Ammi, 
Application of the $L\sp \infty$-energy method 
to the non local thermistor problem, 
Arab. J. Sci. Eng. ASJE. Math. {\bf 35} (2010), no.~1D, 1--12. 

\bibitem{zidler}
E. Zeidler, 
{\it Nonlinear functional analysis and its applications. IV}, 
translated from the German and with a preface
by Juergen Quandt, Springer, New York, 1988.

\bibitem{bernardi}
C. Bernardi\ and\ Y. Maday, 
{\it Approximations spectrales de probl\`emes aux limites elliptiques}, 
Math\'ematiques \& Applications (Berlin), 10, Springer, Paris, 1992.

\end{thebibliography}
\end{document}